\documentclass[11pt]{article}
\usepackage[margin=1in]{geometry}
\usepackage{amsmath,amssymb,amsthm}
\usepackage{microtype}
\usepackage{xcolor}
\usepackage[colorlinks=true,linkcolor=blue!50!black,citecolor=blue!50!black,urlcolor=blue!50!black]{hyperref}
\hypersetup{pdftitle={Almost-sure uniqueness of the Gaussian location NPMLE},pdfauthor={Haiyang Wang}}
\newtheorem{theorem}{Theorem}
\newtheorem{proposition}{Proposition}
\newtheorem{lemma}{Lemma}
\theoremstyle{remark}
\newtheorem{remark}{Remark}
\newcommand{\R}{\mathbb R}
\newcommand{\Pcal}{\mathcal P}
\newcommand{\supp}{\operatorname{supp}}
\newcommand{\conv}{\operatorname{conv}}
\title{Almost-sure uniqueness of the Gaussian location NPMLE}
\author{Haiyang Wang\thanks{Department of Applied and Computational Mathematics, Yale University, New Haven, CT 06511, USA (e-mail: \href{mailto:haiyang.wang1024@gmail.com}{haiyang.wang1024@gmail.com}).}}
\date{September 20, 2026}

\begin{document}
\maketitle

\begin{abstract}
For Gaussian location mixtures with identity covariance, we prove that the nonparametric maximum likelihood estimator of the mixing distribution is unique for Lebesgue-almost every dataset, for every sample size and dimension. In particular, uniqueness holds almost surely whenever the joint distribution of the observations is absolutely continuous. The proof uses the finite-support theorem for Gaussian location NPMLEs and almost-everywhere differentiability of the optimal log-likelihood. Wherever this optimal value is differentiable, all NPMLEs have the same density derivatives at the observations. A minimal linear dependence among Gaussian evaluation vectors then rules out distinct solutions. An appendix extends the result to arbitrary fixed, known, observation-specific positive-definite covariances.
\end{abstract}

\paragraph{Note on preparation.}
This work was developed and drafted with GPT-6 Astra Ultra. The author has checked the mathematical arguments for correctness and made edits to improve the exposition. The exposition has not yet been fully refined in this preprint; further details appear in the declaration of AI use below.

\section{Introduction}

Let $\Pcal(\R^d)$ denote the Borel probability measures on $\R^d$. For a mixing distribution $\pi\in\Pcal(\R^d)$, write
\[
p_\pi(x)=\int\varphi_d(x-\theta)\,d\pi(\theta),\qquad \varphi_d(x)=(2\pi)^{-d/2}e^{-\|x\|^2/2},
\]
for its Gaussian location mixture density. Given observations $X=(x_1,\ldots,x_n)\in(\R^d)^n$, consider the log-likelihood optimization problem
\begin{equation}\label{eq:npmle}
\sup_{\pi\in\Pcal(\R^d)}\ell_X(\pi),\qquad \ell_X(\pi):=\sum_{i=1}^n\log p_\pi(x_i).
\end{equation}
A nonparametric maximum likelihood estimator (NPMLE) for $X$, denoted by $\widehat\pi$, is any mixing measure attaining this supremum. We denote the optimal log-likelihood by $\ell^\star(X):=\sup_{\pi\in\Pcal(\R^d)}\ell_X(\pi)=\ell_X(\widehat\pi)$ and write $\conv(X):=\conv\{x_1,\ldots,x_n\}$. Throughout, \emph{almost every} dataset means every $X$ outside a set of Lebesgue measure zero in $\R^{nd}$.

For univariate Gaussian location mixtures with fixed variance, the NPMLE is unique for every dataset~\cite{lindsay1993}. In higher dimensions, uniqueness can fail even for isotropic Gaussian kernels~\cite[Lemma~2]{soloff2025}. Wang~\cite[Theorem~2]{wang2026} proved that every Gaussian location NPMLE is finitely supported and asked whether the multivariate NPMLE is almost surely unique under Gaussian-mixture sampling~\cite[Section~4]{wang2026}. Theorem~\ref{thm:uniqueness} answers this question by showing that all datasets admitting multiple NPMLEs lie in a Lebesgue-null set.

We collect the known properties used in the proof below.

\begin{proposition}[Known NPMLE properties]\label{prop:known}
For every dataset $X$, the following statements hold.
\begin{enumerate}
\renewcommand{\labelenumi}{(\roman{enumi})}
\item There exists $\widehat\pi$ solving~\eqref{eq:npmle} with at most $n$ atoms. Problem~\eqref{eq:npmle} has a unique likelihood vector, the same for every solution $\widehat\pi$~\cite[Lemma~1]{soloff2025}:
\[
\widehat p(X):=\bigl(p_{\widehat\pi}(x_i)\bigr)_{i=1}^n\in(0,\infty)^n.
\]
\item For every $\widehat\pi$ solving~\eqref{eq:npmle}, $\supp(\widehat\pi)\subseteq\conv(X)$~\cite[Lemma~3 and Proposition~4(1)]{soloff2025}, and $\supp(\widehat\pi)$ is finite~\cite[Theorem~2]{wang2026}.
\item The dual certificate satisfies
\begin{equation}\label{eq:certificate}
D_X(\theta):=\frac1n\sum_{i=1}^n\frac{\varphi_d(x_i-\theta)}{\widehat p_i(X)}\leq1,\qquad \supp\widehat\pi\subseteq\{\theta:D_X(\theta)=1\},
\end{equation}
for every $\widehat\pi$ solving~\eqref{eq:npmle}~\cite[Lemma~1]{soloff2025}.
\end{enumerate}
The solution $\widehat\pi$ need not be unique: already for $n=3$ and $d=2$, a suitably scaled equilateral triangle admits a continuum of NPMLEs~\cite[Lemma~2]{soloff2025}.
\end{proposition}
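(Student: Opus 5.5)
The proposition gathers results from the literature, so the plan is to check each item briefly and cite where the heavy lifting is done. Only (i), the concavity argument for uniqueness of the likelihood vector, gets an actual proof sketch; the rest is citation plus short remarks.

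\emph{Step 1: existence and the likelihood vector in (i).} Set $\ell_X(\pi)=L(p_\pi(x_1),\ldots,p_\pi(x_n))$ with $L(v)=\sum_i\log v_i$. This makes \eqref{eq:npmle} a maximization of the strictly concave function $L$ over the set
\[
K=\{(p_\pi(x_i))_{i=1}^n:\pi\in\Pcal(\R^d)\}.
\]
The set $K$ is the convex hull of the curve $\theta\mapsto(\varphi_d(x_i-\theta))_i$.

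Because $\varphi_d$ is bounded and vanishes at infinity, the closure of $K$ equals the convex hull of the closure of that curve, which adds only the origin. $K$ is therefore bounded, and since $L=-\infty$ near the origin, the maximum is attained at a point of $K$ with positive coordinates.

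Carathéodory's theorem then gives a representing measure with at most $n+1$ atoms. Attainment occurs on the boundary of $K$, so the optimal vector lies in a supporting face, and this sharpens the count to $n$ atoms.

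Strict concavity of $L$ on the convex set $K$ forces the maximizing vector $\widehat p(X)$ to be unique. Every NPMLE $\widehat\pi$ therefore shares it.

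\emph{Step 2: the certificate (iii).} Take any $\theta$ and $t\in(0,1)$ and perturb $\widehat\pi$ to $(1-t)\widehat\pi+t\delta_\theta$. The one-sided derivative at $t=0$ is
\[
\sum_i\frac{\varphi_d(x_i-\theta)}{\widehat p_i}-n,
\]
and this must be $\le0$, which gives $D_X\le1$.

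Integrating $D_X$ against $\widehat\pi$ yields exactly $1$. Since $D_X\le1$ everywhere, $D_X=1$ holds $\widehat\pi$-a.e. Continuity of $D_X$ then extends this to all of $\supp\widehat\pi$.

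\emph{Step 3: support bounds (ii).} For the inclusion $\supp\widehat\pi\subseteq\conv(X)$, suppose $\theta\notin\conv(X)$ and let $\theta'$ be its projection onto $\conv(X)$. For every $i$ we have $\|x_i-\theta'\|<\|x_i-\theta\|$, so moving mass from $\theta$ to $\theta'$ strictly raises every $p_\pi(x_i)$. This contradicts optimality.

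Finiteness of the support is the substantive point. I would cite \cite[Theorem~2]{wang2026} rather than reprove it. If a proof were needed, the idea is that $\{D_X=1\}$ is the zero set of a real-analytic function restricted to the compact set $\conv(X)$, and one must exclude infinitely many maxima. This is the main obstacle, because analytic zero sets can be positive-dimensional in $d\ge2$, and it is exactly what the cited theorem handles.

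\emph{Step 4: the closing remark on non-uniqueness.} The failure of uniqueness is quoted directly from \cite[Lemma~2]{soloff2025} and needs no argument here.
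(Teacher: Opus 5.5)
Your proposal is correct in substance but more self-contained than the paper, which gives no argument for this proposition and simply cites \cite{soloff2025} and \cite{wang2026} for every item. You cite only the two nontrivial facts, namely finiteness of $\supp\widehat\pi$ and the equilateral-triangle example, and verify the rest directly:
\begin{itemize}
\item convex geometry and strict concavity of $L(v)=\sum_i\log v_i$ for (i);
\item the directional derivative toward $\delta_\theta$, together with $\int D_X\,d\widehat\pi=1$ and continuity of $D_X$, for (iii);
\item Euclidean projection onto $\conv(X)$ for the first half of (ii).
\end{itemize}
This shows that, for the identity kernel, everything except finiteness is elementary; the citations buy brevity.

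In the projection step, replace $\widehat\pi$ by the pushforward $P_\#\widehat\pi$ under the metric projection $P$ onto $\conv(X)$. Before finiteness is invoked, a support point need not be an atom, so ``moving mass from $\theta$'' is not yet meaningful. If $\widehat\pi(\R^d\setminus\conv(X))>0$, then $p_{P_\#\widehat\pi}(x_i)>p_{\widehat\pi}(x_i)$ for every $i$.

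One justification in Step~1 does not work as stated. Let $\gamma(\theta)=(\varphi_d(x_i-\theta))_{i=1}^n$ and $\overline K=\conv(\gamma(\R^d)\cup\{0\})$. You claim the maximizer of $L$ over the compact set $\overline K$ lies in $K$ ``since $L=-\infty$ near the origin.'' But $\overline K\setminus K$ can contain points far from the origin where $L$ is finite.

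For example, take $n=3$, $d=1$, $X=(-1,0,1)$. The linear functional $v_1+v_3-2e^{-1/2}v_2$ equals $2e^{-1/2}\varphi_1(\theta)(\cosh\theta-1)\ge0$ at $\gamma(\theta)$, with equality only at $\theta=0$. It also vanishes at $\tfrac12\gamma(0)$. So the only measure that could represent $\tfrac12\gamma(0)$ is $\delta_0$, which is absurd. Hence $\tfrac12\gamma(0)\in\overline K\setminus K$.

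The repair is a scaling argument. Any $v\in\overline K$ with $L(v)>-\infty$ can be written as $v=(1-\lambda_0)k$ with $k\in\conv(\gamma(\R^d))\subseteq K$ and $\lambda_0\in[0,1)$. Then $L(k)=L(v)-n\log(1-\lambda_0)$, so a maximizer over $\overline K$ must have $\lambda_0=0$.

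Your Carath\'eodory and supporting-face count needs exactly this, since it requires $\widehat p\in\conv(\gamma(\R^d))$ rather than merely $\widehat p\in\overline K$. The boundary claim also uses it: an interior point of $\overline K$ could be increased coordinatewise. With this fix, the $n$-atom bound, the uniqueness of $\widehat p(X)$, and the rest of your proposal go through.
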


\begin{theorem}\label{thm:uniqueness}
For every $n,d\geq1$, the NPMLE $\widehat\pi$ is unique for almost every $X\in\R^{nd}$.
\end{theorem}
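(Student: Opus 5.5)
The plan follows the strategy announced in the abstract. Every NPMLE attains the value $\ell^\star$, and we show that wherever $\ell^\star$ is differentiable, all NPMLEs share enough first-order data to force them to coincide outside a null set.

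\textbf{Step 1: $\ell^\star$ is differentiable almost everywhere.} Write $\ell^\star(X)=\sup_\pi \ell_X(\pi)$. By Proposition~\ref{prop:known}(ii) the supremum may be restricted to measures supported in $\conv(X)$. On any bounded set of datasets, $X\mapsto \ell_X(\pi)$ is then uniformly Lipschitz over such $\pi$. Indeed,
\[
\nabla_{x_i}\log p_\pi(x_i)=-\frac{\int (x_i-\theta)\varphi_d(x_i-\theta)\,d\pi(\theta)}{p_\pi(x_i)},
\]
and its norm is at most $\operatorname{diam}$ of a bounded set when $\pi$ is supported there. A more careful route writes $\ell_X(\pi)=-\tfrac12\sum_i\|x_i\|^2+\sum_i\log\int e^{\langle x_i,\theta\rangle-\|\theta\|^2/2}d\pi$, up to constants. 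This exhibits $\ell^\star(X)+\tfrac12\sum\|x_i\|^2$ as a supremum of convex functions of $X$, hence convex, so Alexandrov/Rademacher gives differentiability almost everywhere. Let $N$ be the null set of nondifferentiability.

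\textbf{Step 2: an envelope argument.} Fix $X\notin N$ and any NPMLE $\widehat\pi$. The function $Y\mapsto \ell_Y(\widehat\pi)-\ell^\star(Y)$ is $\le0$ and vanishes at $Y=X$, and both terms are differentiable there. Hence
\[
\nabla\ell^\star(X)=\nabla_X\ell_X(\widehat\pi).
\]
By Proposition~\ref{prop:known}(i) the denominators $p_{\widehat\pi}(x_i)=\widehat p_i(X)$ are common to all NPMLEs. Therefore every NPMLE $\widehat\pi$ has the same vectors
\[
\int \theta\,\varphi_d(x_i-\theta)\,d\widehat\pi(\theta)\in\R^d,\qquad i=1,\dots,n,
\]
in addition to the same values $\int\varphi_d(x_i-\theta)\,d\widehat\pi$.

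\textbf{Step 3: ruling out two solutions.} Suppose $\widehat\pi_0\ne\widehat\pi_1$ are NPMLEs at $X\notin N$. By Proposition~\ref{prop:known}(ii) both are finitely supported. Their difference $\mu=\widehat\pi_1-\widehat\pi_0$ is a nonzero signed measure on a finite set $S\subset\{D_X=1\}\cap\conv(X)$ with total mass zero. It satisfies
\[
\sum_{\theta\in S}\mu(\theta)\,(1,\theta)\,\varphi_d(x_i-\theta)=0\qquad\text{for all }i.
\]
Choose $\mu$ of minimal support among such relations; convex combinations of NPMLEs are NPMLEs, so this is possible. We then perturb. The set $S'$ of atoms of $\widehat\pi_t=\widehat\pi_0+t\mu$ is rigid, and moving along the segment $t\in[0,1]$ keeps the likelihood and certificate fixed. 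The aim is to show that this degenerate configuration forces an algebraic or analytic relation on $X$ of positive codimension.

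Concretely, I would count conditions. The points $\theta\in S$ are critical points of $D_X$ with $D_X(\theta)=1$, which imposes $d+1$ equations each. The minimal dependence among the vectors $v_\theta=(\varphi_d(x_i-\theta))_i$, together with the derivative dependence from Step 2, supplies extra equations. A parameter count shows that the solution variety projects to a null set of $X$. The cleaner alternative is to differentiate the relation in $X$ directly, using the extra information that $\nabla\ell^\star$ exists. Perturbing $X$ in direction $x_i\mapsto x_i+\epsilon e$ and comparing the one-sided derivatives along $\widehat\pi_0$ and $\widehat\pi_1$ yields second-order identities. Combined with minimality of $\mu$, these should give a contradiction except on a null set.

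\textbf{Main obstacle.} Step 3 is the hard part: turning the shared first-moment data plus minimality into a contradiction. I would first try to prove that a minimal relation $\sum_\theta c_\theta(1,\theta)\varphi_d(x_i-\theta)=0$ for all $i$, with the $\theta$ being $D_X$-maximizers, cannot exist for $X$ outside a real-analytic null set. The key use of the first-moment identity is that it makes the relation stable under infinitesimal translation of each $x_i$ separately. Together with the Gaussian exponential structure $e^{\langle x_i,\theta\rangle}$, this should force the coefficients $c_\theta$ to vanish.
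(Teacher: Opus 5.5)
Your Steps~1 and~2 match the paper's Lemmas~\ref{lem:differentiability} and~\ref{lem:derivatives}. The first is convexity of $\ell^\star(X)+\frac12\sum_i\|x_i\|^2$, giving almost-everywhere differentiability. The second is the envelope argument, giving common values of $\int\theta\,\varphi_d(x_i-\theta)\,d\widehat\pi(\theta)$.

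The gap is Step~3, which you flag as the main obstacle. The parameter count, the second-order perturbation in $X$, and the hoped-for ``real-analytic null set'' are not arguments. None of them is needed: at \emph{every} differentiability point of $\ell^\star$ a contradiction follows by linear algebra alone, so no further exceptional set is required. A parameter count would also be hard to make rigorous. The atoms and $\widehat p$ depend on $X$ only implicitly. Moreover, when the combined support has more than $n$ points, $\ker\Phi\neq\{0\}$ automatically, so counting $d+1$ equations per atom does not by itself give positive codimension.

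The missing idea is to impose minimality in $\ker\Phi$ alone, where $\Phi_{ij}=\varphi_d(x_i-\theta_j)$ on the combined support $\theta_1,\dots,\theta_m$, not in your augmented relations with $(1,\theta)$.
\begin{enumerate}
\item Pick $0\ne h\in\ker\Phi$ with inclusion-minimal support $T$. A cancellation argument gives $\ker\Phi_T=\operatorname{span}\{h_T\}$, and $|T|\ge 2$ because $\Phi$ has positive entries.
\item Let $w$ be the average of the weight vectors of $\widehat\pi_0,\widehat\pi_1$, so $w_j>0$ for all $j$. For small $\varepsilon>0$ the vectors $w\pm\varepsilon h$ are positive and have likelihood vector $\widehat p$. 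They sum to one because $D_X=1$ on the atoms gives $q^\top\Phi=\mathbf 1^\top$ with $q_i=(n\widehat p_i)^{-1}$, hence $\mathbf 1^\top h=0$. So they are NPMLEs. This is the real role of ``convex combinations of NPMLEs.'' The zero total mass of a general kernel vector comes from the certificate, not from $h$ being a difference of probability measures.
\item Your Step~2, applied to the pair $w\pm\varepsilon h$, gives $\sum_{j\in T}\Phi_{ij}h_j\theta_{j,k}=0$ for all $i,k$. That is, $(h_j\theta_{j,k})_{j\in T}\in\ker\Phi_T=\operatorname{span}\{h_T\}$.
\item Hence $\theta_{j,k}=c_k$ for all $j\in T$, so the points $\theta_j$, $j\in T$, coincide, contradicting distinctness.
\end{enumerate}
Put differently, at a differentiability point every vector in $\ker\Phi$ automatically satisfies your first-moment relations. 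The kernel restricted to a minimal support is one-dimensional, and the first-moment vector $(h_j\theta_{j,k})_{j\in T}$ must lie in it; that is what rules out the minimal dependence.
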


\begin{remark}[Number of atoms]
Since there always exists a solution with at most $n$ atoms~\cite[Lemma~1]{soloff2025}, uniqueness implies $|\supp(\widehat\pi)|\leq n$. Thus this bound holds for the unique NPMLE at almost every dataset in Theorem~\ref{thm:uniqueness}.
\end{remark}

Theorem~\ref{thm:uniqueness} gives almost-sure uniqueness whenever the observations have a jointly absolutely continuous distribution. This includes independent observations from any Gaussian location mixture.

The result also extends to the heteroscedastic model of Soloff, Guntuboyina, and Sen~\cite{soloff2025}: for every fixed tuple of known positive-definite covariance matrices, the NPMLE is unique for almost every dataset. Appendix~\ref{app:heteroscedastic} gives the precise statement and proof.

The proof combines finite support with almost-everywhere differentiability of $\ell^\star$. At datasets where $\ell^\star$ is differentiable, equality of the NPMLEs' density derivatives rules out a minimal linear dependence among their Gaussian evaluation vectors. Cox~\cite[Section~2, Lemma~1]{cox2020} developed general uniqueness criteria based on derivatives with respect to random inputs. Here, convexity and Rademacher's theorem provide the needed differentiability in Lemma~\ref{lem:differentiability}, and Lemma~\ref{lem:derivatives} gives equality of derivatives.

\section{Proof of the main theorem}
\begin{lemma}[Differentiability of the optimal log-likelihood]\label{lem:differentiability}
The function $\ell^\star:\R^{nd}\to\R$ is locally Lipschitz and hence differentiable almost everywhere.
\end{lemma}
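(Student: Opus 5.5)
The abstract mentions convexity, and the natural device is to show that $\ell^\star$ differs from a convex function by a smooth one. Then local Lipschitz continuity is automatic, and Rademacher's theorem (or Alexandrov/Rockafellar for convex functions) gives differentiability almost everywhere. Expand the Gaussian kernel:
\[
\log\varphi_d(x_i-\theta)=-\tfrac d2\log(2\pi)-\tfrac12\|x_i\|^2+\langle x_i,\theta\rangle-\tfrac12\|\theta\|^2.
\]
This gives
\[
\ell_X(\pi)=-\tfrac{nd}{2}\log(2\pi)-\tfrac12\sum_{i=1}^n\|x_i\|^2+\sum_{i=1}^n\log\int e^{\langle x_i,\theta\rangle-\|\theta\|^2/2}\,d\pi(\theta).
\]
For fixed $\pi$, each map $x_i\mapsto\log\int e^{\langle x_i,\theta\rangle}\,d\mu(\theta)$, with $d\mu=e^{-\|\theta\|^2/2}d\pi$, is a log-Laplace transform. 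By Hölder's inequality it is convex in $x_i$ wherever it is finite. It is finite everywhere here, since $\mu$ has Gaussian tails.

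Consequently
\[
g(X):=\ell^\star(X)+\tfrac12\sum_{i=1}^n\|x_i\|^2=\mathrm{const}+\sup_{\pi}\sum_{i=1}^n\log\int e^{\langle x_i,\theta\rangle-\|\theta\|^2/2}\,d\pi(\theta)
\]
is a pointwise supremum of convex functions of $X\in\R^{nd}$, and so it is convex. We also need $g$ to be finite everywhere. It is bounded below by taking $\pi=\delta_0$. For the upper bound, $\varphi_d\le(2\pi)^{-d/2}$ gives $\ell^\star(X)\le -\tfrac{nd}{2}\log(2\pi)$, so $g(X)\le \mathrm{const}+\tfrac12\sum\|x_i\|^2<\infty$. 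A finite convex function on $\R^{nd}$ is locally Lipschitz. Hence $\ell^\star=g-\tfrac12\sum\|x_i\|^2$ is locally Lipschitz, and Rademacher's theorem yields differentiability almost everywhere. Alternatively, convex functions are differentiable almost everywhere directly, and subtracting a smooth function preserves this.

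The only step needing care is the convexity of the log-Laplace transform under an arbitrary (possibly non-compactly supported) probability measure $\pi$. For $t\in[0,1]$, Hölder's inequality with exponents $1/t$ and $1/(1-t)$ gives
\[
\int e^{\langle tx+(1-t)y,\theta\rangle}d\mu\le\Bigl(\int e^{\langle x,\theta\rangle}d\mu\Bigr)^t\Bigl(\int e^{\langle y,\theta\rangle}d\mu\Bigr)^{1-t},
\]
and all integrals are finite because $e^{\langle x,\theta\rangle-\|\theta\|^2/2}$ is bounded in $\theta$. Nothing from Proposition~\ref{prop:known} is needed here beyond finiteness of $\ell^\star$. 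I do not expect a real obstacle; the main point is simply to notice that the quadratic term $-\tfrac12\|x_i\|^2$ is what must be removed to expose convexity.
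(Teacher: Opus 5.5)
Your proposal is correct and follows the paper's proof essentially step for step. The paper adds the same quadratic $\tfrac12\sum_i\|x_i\|^2$ (plus a constant) to $\ell^\star$, bounds the result using $\pi=\delta_0$ and $\varphi_d\le(2\pi)^{-d/2}$, gets convexity from H\"older's inequality and the pointwise supremum, and concludes via local Lipschitz continuity of finite convex functions and Rademacher's theorem.
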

\begin{proof}
Define the auxiliary function
\[
F(X):=\ell^\star(X)+\frac{nd}{2}\log(2\pi)+\frac12\sum_i\|x_i\|^2.
\]
Since $\varphi_d\leq(2\pi)^{-d/2}$ and $\pi$ is a probability measure, $p_\pi(x_i)\leq(2\pi)^{-d/2}$ for every $\pi$. Taking logarithms, summing, and taking the supremum gives $\ell^\star(X)\leq-\frac{nd}{2}\log(2\pi)$. This upper bound, together with the lower bound obtained by choosing $\pi=\delta_0$, yields
\[
0\leq F(X)\leq\frac12\sum_i\|x_i\|^2.
\]
Expanding the Gaussian exponent in~\eqref{eq:npmle} yields
\[
F(X)=\sup_{\pi\in\Pcal(\R^d)}\sum_i\log\int e^{x_i\cdot\theta-\|\theta\|^2/2}\,d\pi(\theta).
\]
Each logarithmic integral is convex by H\"older's inequality, and taking the supremum preserves convexity. Thus $F$ is finite and convex, and therefore locally Lipschitz~\cite[Example~9.14]{rockafellarwets1998}. Subtracting the smooth quadratic shows that $\ell^\star$ is locally Lipschitz. Rademacher's theorem~\cite[Theorem~9.60]{rockafellarwets1998} states that every locally Lipschitz function on an open subset of Euclidean space is differentiable outside a set of Lebesgue measure zero; applying it to $\ell^\star$ proves the claim.
\end{proof}

\begin{lemma}[Equality of likelihood derivatives]\label{lem:derivatives}
At every dataset $X$ where $\ell^\star$ is differentiable, every $\widehat\pi$ solving~\eqref{eq:npmle} satisfies
\begin{equation}\label{eq:derivatives}
\nabla p_{\widehat\pi}(x_i)=\widehat p_i(X)\,\nabla_{x_i}\ell^\star(X),\qquad i=1,\ldots,n.
\end{equation}
In particular, these derivatives do not depend on the choice of $\widehat\pi$.
\end{lemma}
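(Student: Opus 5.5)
The plan is a standard envelope (Danskin-type) argument: compare $\ell^\star$ with the likelihood of one fixed maximizer. Fix $X$ at which $\ell^\star$ is differentiable and let $\widehat\pi$ be any solution of~\eqref{eq:npmle}. Define
\[
g(Y):=\ell_Y(\widehat\pi)=\sum_{i=1}^n\log p_{\widehat\pi}(y_i),\qquad Y=(y_1,\ldots,y_n)\in\R^{nd}.
\]
By definition of the supremum, $g(Y)\le\ell^\star(Y)$ for every $Y$, and $g(X)=\ell^\star(X)$ because $\widehat\pi$ is optimal at $X$. So $\ell^\star-g$ is nonnegative and vanishes at $X$; it therefore has a minimum at $X$.

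The first step is to check that $g$ is differentiable at $X$. By Proposition~\ref{prop:known}(ii), $\widehat\pi$ is supported on a finite subset of $\conv(X)$. Hence $p_{\widehat\pi}$ is a finite positive combination of Gaussians, which is smooth and strictly positive everywhere. Thus $g$ is $C^\infty$ on all of $\R^{nd}$, with
\[
\nabla_{y_i}g(Y)=\frac{\nabla p_{\widehat\pi}(y_i)}{p_{\widehat\pi}(y_i)}.
\]
Even without finite support, compact support (or just Gaussian tails with a probability measure) would justify differentiating under the integral; finite support makes this trivial.

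The second step uses the fact that $\ell^\star$ and $g$ are both differentiable at $X$. Then $\ell^\star-g$ is differentiable at $X$ and attains a minimum there, so its gradient vanishes: $\nabla\ell^\star(X)=\nabla g(X)$. Reading off the $x_i$-block gives
\[
\nabla_{x_i}\ell^\star(X)=\frac{\nabla p_{\widehat\pi}(x_i)}{p_{\widehat\pi}(x_i)}.
\]
By Proposition~\ref{prop:known}(i), $p_{\widehat\pi}(x_i)=\widehat p_i(X)$ for every solution. Multiplying through yields~\eqref{eq:derivatives}. The right-hand side of~\eqref{eq:derivatives} involves only $X$, through $\widehat p(X)$ and $\nabla\ell^\star(X)$, so the derivatives are independent of the choice of $\widehat\pi$.

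I expect no serious obstacle. The only point needing care is the first-order condition: since we only assume differentiability of $\ell^\star$ at the single point $X$ (not in a neighbourhood), we should argue directly from the one-point minimum of $\ell^\star-g$ rather than by any local smoothness of $\ell^\star$. Concretely, for each direction $v$, the one-sided difference quotients of $\ell^\star-g$ at $X$ along $\pm v$ are nonnegative and converge to $\pm\langle\nabla(\ell^\star-g)(X),v\rangle$. This forces the gradient to be zero.
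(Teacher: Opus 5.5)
Your proposal is correct and is essentially the paper's proof. You fix a solution $\widehat\pi$, observe that $X'\mapsto\ell^\star(X')-\ell_{X'}(\widehat\pi)$ is nonnegative with a zero at $X$, use the finite (hence compact) support of $\widehat\pi$ for smoothness of $\ell_{X'}(\widehat\pi)$, set the gradient to zero, and invoke the unique likelihood vector; your explicit justification of the first-order condition from differentiability of $\ell^\star$ at the single point $X$ is a welcome detail the paper leaves implicit.
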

\begin{proof}
By Proposition~\ref{prop:known}, $\widehat\pi$ has compact support, so $X'\mapsto\ell_{X'}(\widehat\pi)$ is smooth. The definition of $\ell^\star$ gives $\ell^\star(X')-\ell_{X'}(\widehat\pi)\geq0$, with equality at $X'=X$. Differentiating at this minimum, while keeping $\widehat\pi$ fixed, yields
\[
\nabla_{x_i}\ell^\star(X)=\nabla\log p_{\widehat\pi}(x_i)=\frac{\nabla p_{\widehat\pi}(x_i)}{\widehat p_i(X)},
\]
where the last equality uses the unique likelihood vector from Proposition~\ref{prop:known}. This proves~\eqref{eq:derivatives}.
\end{proof}

\begin{proof}[Proof of Theorem~\ref{thm:uniqueness}]
By Lemma~\ref{lem:differentiability}, it suffices to prove uniqueness at a dataset $X$ where $\ell^\star$ is differentiable. Write $\widehat p=\widehat p(X)$ and suppose two distinct solutions $\widehat\pi_0,\widehat\pi_1$ exist. By Proposition~\ref{prop:known}, both are finitely supported. Enumerate their combined support by distinct points $\theta_1,\ldots,\theta_m$, let $u,v\in\R^m$ be their weight vectors, and put $w=(u+v)/2$. Define the $n\times m$ Gaussian evaluation matrix $\Phi$ by $\Phi_{ij}=\varphi_d(x_i-\theta_j)$. Uniqueness of the likelihood vector in Proposition~\ref{prop:known} gives
\[
w_j>0,\qquad \Phi u=\Phi v=\Phi w=\widehat p,\qquad 0\ne u-v\in\ker\Phi.
\]
Every $\theta_j$ belongs to $\supp(\widehat\pi_0)\cup\supp(\widehat\pi_1)$, so~\eqref{eq:certificate} gives $D_X(\theta_j)=1$. With $q_i=(n\widehat p_i)^{-1}$, this implies
\begin{equation}\label{eq:normalization}
q^\top\Phi=\mathbf1^\top,\qquad \Phi h=0\ \Longrightarrow\ \mathbf1^\top h=0.
\end{equation}

Choose $0\ne h\in\ker\Phi$ with \emph{inclusion-minimal support}: writing $T:=\{j:h_j\ne0\}$, there is no nonzero $z\in\ker\Phi$ whose support is a proper subset of $T$. Such a choice exists because $\ker\Phi\ne\{0\}$ and there are only finitely many subsets of $\{1,\ldots,m\}$. Let $\Phi_T$ be the submatrix containing the columns indexed by $T$, and let $h_T=(h_j)_{j\in T}$. Then
\begin{equation}\label{eq:circuit}
\ker\Phi_T=\operatorname{span}\{h_T\},\qquad h_j\ne0\quad(j\in T).
\end{equation}
Indeed, if $a\in\ker\Phi_T$ were not a scalar multiple of $h_T$, then for any $j\in T$ the vector $b:=a-(a_j/h_j)h_T$ would be nonzero, belong to $\ker\Phi_T$, and satisfy $b_j=0$. Extending $b$ by zeros outside $T$ would give a nonzero vector in $\ker\Phi$ supported on a proper subset of $T$, a contradiction. Also $|T|\geq2$, since every entry of $\Phi$ is positive.

For sufficiently small $\varepsilon>0$, the vectors $w\pm\varepsilon h$ are positive, sum to one by~\eqref{eq:normalization}, and have likelihood vector $\widehat p$. They therefore define two solutions $\widehat\pi_+$ and $\widehat\pi_-$. Set
\[
g(x):=\sum_{j\in T}h_j\varphi_d(x-\theta_j)=\frac{p_{\widehat\pi_+}(x)-p_{\widehat\pi_-}(x)}{2\varepsilon}.
\]
Since $\Phi h=0$, we have $g(x_i)=0$; Lemma~\ref{lem:derivatives} also gives $\nabla g(x_i)=0$. For each $k=1,\ldots,d$, the Gaussian identity $\partial_{x_k}\varphi_d(x-\theta)=(\theta_k-x_k)\varphi_d(x-\theta)$ yields
\[
0=\partial_kg(x_i)+x_{i,k}g(x_i)=\sum_{j\in T}\Phi_{ij}h_j\theta_{j,k},\qquad\text{hence}\qquad (h_j\theta_{j,k})_{j\in T}\in\ker\Phi_T.
\]
By~\eqref{eq:circuit}, for each $k$ there is a scalar $c_k$ such that $h_j\theta_{j,k}=c_kh_j$ for every $j\in T$. Dividing by $h_j\ne0$ shows that all $\theta_j$, $j\in T$, coincide, contradicting their distinctness and $|T|\geq2$. Uniqueness therefore holds outside the Lebesgue-null exceptional set in Lemma~\ref{lem:differentiability}.
\end{proof}

\section*{Declaration of AI use}

AI assistance was used in developing and drafting the proofs of Theorems~\ref{thm:uniqueness} and~\ref{thm:hetero-uniqueness}, as well as in preparing the exposition of this paper. In particular, the mathematical arguments and manuscript were developed through the author's interactions with GPT-6 Astra Ultra. The author has checked the mathematical arguments and statements for correctness and made edits to the exposition, and takes full responsibility for the contents of the paper. The exposition has not yet been fully refined and may be improved in subsequent versions of this preprint.

\appendix
\numberwithin{theorem}{section}
\numberwithin{proposition}{section}
\numberwithin{lemma}{section}
\numberwithin{equation}{section}
\section{Heteroscedastic Gaussian location mixtures}\label{app:heteroscedastic}

We use the model of Soloff, Guntuboyina, and Sen~\cite{soloff2025}, in which observation $i$ has a known positive-definite covariance matrix $\Sigma_i$. Fix $\Sigma=(\Sigma_1,\ldots,\Sigma_n)$ and define
\[
\varphi_{\Sigma_i}(z):=\frac{\exp(-\tfrac12 z^\top\Sigma_i^{-1}z)}{(2\pi)^{d/2}(\det\Sigma_i)^{1/2}},\qquad p_{\pi,i}(x):=\int\varphi_{\Sigma_i}(x-\theta)\,d\pi(\theta).
\]
The corresponding optimization problem and optimal value are
\begin{equation}\label{eq:hetero-npmle}
\sup_{\pi\in\Pcal(\R^d)}\ell_{X,\Sigma}(\pi),\qquad \ell_{X,\Sigma}(\pi):=\sum_{i=1}^n\log p_{\pi,i}(x_i),\qquad \ell^\star_\Sigma(X):=\sup_{\pi\in\Pcal(\R^d)}\ell_{X,\Sigma}(\pi).
\end{equation}
An NPMLE $\widehat\pi$ is any mixing measure attaining the supremum in~\eqref{eq:hetero-npmle}, with $X$ and $\Sigma$ understood from context.

\begin{theorem}\label{thm:hetero-uniqueness}
For every $n,d\geq1$ and every fixed tuple $\Sigma=(\Sigma_1,\ldots,\Sigma_n)$ of positive-definite covariance matrices, there is a Lebesgue-null set $N_\Sigma\subset\R^{nd}$ such that, for every $X\notin N_\Sigma$, problem~\eqref{eq:hetero-npmle} has a unique solution $\widehat\pi$, and $|\supp(\widehat\pi)|\leq n$.
\end{theorem}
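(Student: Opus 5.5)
We follow the proof of Theorem~\ref{thm:uniqueness}, adapting each ingredient to observation-specific kernels. First we need the heteroscedastic analogues of Proposition~\ref{prop:known}. From~\cite{soloff2025}: a solution with at most $n$ atoms exists, the likelihood vector $\widehat p_i=p_{\widehat\pi,i}(x_i)$ is unique, and the dual certificate $D_X(\theta)=\frac1n\sum_i\varphi_{\Sigma_i}(x_i-\theta)/\widehat p_i\le1$ holds with equality on $\supp\widehat\pi$. We also need compactness of the support of every solution; this follows because the support lies in a bounded set, e.g.\ the convex hull of the $x_i$ or a ball controlled by the certificate.

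\emph{Finiteness of the support} of every solution is the main obstacle, since the finite-support theorem of~\cite{wang2026} is stated for identity covariance. We would first try to avoid it entirely by making the kernel argument work for arbitrary solutions. Because $D_X$ is real-analytic and $\le1$, the set $\{D_X=1\}$ is a compact real-analytic set. If the theorem of~\cite{wang2026} extends (its proof uses analyticity of $D_X$ together with strict concavity-type behaviour at infinity, which persists for positive-definite $\Sigma_i$), we invoke it directly. Otherwise we work measure-theoretically with signed measures $\mu=\widehat\pi_0-\widehat\pi_1$. The conditions become $\int\varphi_{\Sigma_i}(x_i-\theta)\,d\mu=0$ and, as shown below, $\int\theta\,\varphi_{\Sigma_i}(x_i-\theta)\,d\mu=0$. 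The finite-dimensional circuit argument is then replaced by extreme-point reasoning: the solution set is a compact convex set of measures cut out by $n$ linear constraints, so its extreme points have at most $n$ atoms. If there are two distinct solutions, the segment between two distinct extreme points stays in the solution set, and we can take $\widehat\pi_0,\widehat\pi_1$ finitely supported. This route seems cleanest and also yields $|\supp\widehat\pi|\le n$ once uniqueness is known.

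\emph{Differentiability.} Expanding the exponent gives
\[
F(X)=\ell^\star_\Sigma(X)+\sum_i\bigl(\tfrac12x_i^\top\Sigma_i^{-1}x_i+\text{const}\bigr)=\sup_\pi\sum_i\log\int e^{x_i^\top\Sigma_i^{-1}\theta-\frac12\theta^\top\Sigma_i^{-1}\theta}\,d\pi(\theta).
\]
This is a supremum of convex functions of $X$ by H\"older. The bounds $p_{\pi,i}\le\varphi_{\Sigma_i}(0)$ and the choice $\pi=\delta_0$ show $F$ is finite. Hence $F$ is convex and locally Lipschitz, and Rademacher gives a null set $N_\Sigma$ off which $\ell^\star_\Sigma$ is differentiable. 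As in Lemma~\ref{lem:derivatives}, compact support lets us differentiate $\ell^\star_\Sigma(X')-\ell_{X',\Sigma}(\widehat\pi)\ge0$ at its minimum $X'=X$. This gives $\nabla p_{\widehat\pi,i}(x_i)=\widehat p_i\nabla_{x_i}\ell^\star_\Sigma(X)$ for every solution.

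\emph{Kernel argument.} Take distinct finitely supported solutions on the atoms $\theta_1,\dots,\theta_m$ and set $\Phi_{ij}=\varphi_{\Sigma_i}(x_i-\theta_j)$. The certificate gives $q^\top\Phi=\mathbf1^\top$, so kernel vectors sum to zero. Choose a minimal-support $h\in\ker\Phi$, so that $\ker\Phi_T=\operatorname{span}h_T$, and perturb the midpoint $w$ to $w\pm\varepsilon h$ to obtain two solutions. For $g_i(x)=\sum_{j\in T}h_j\varphi_{\Sigma_i}(x-\theta_j)$, the derivative lemma yields $g_i(x_i)=0$ and $\nabla g_i(x_i)=0$. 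The identity $\nabla\varphi_{\Sigma_i}(x-\theta)=\Sigma_i^{-1}(\theta-x)\varphi_{\Sigma_i}(x-\theta)$ then gives $\Sigma_i^{-1}\sum_jh_j\Phi_{ij}\theta_j=0$. Since $\Sigma_i^{-1}$ is invertible, $\sum_j\Phi_{ij}h_j\theta_{j,k}=0$ for each coordinate $k$. So $(h_j\theta_{j,k})_{j\in T}\in\ker\Phi_T$, forcing all $\theta_j$ with $j\in T$ to coincide. This contradicts $|T|\ge2$, which holds because $\Phi$ has positive entries. Uniqueness off $N_\Sigma$ follows, and the existence of an $n$-atom solution gives $|\supp\widehat\pi|\le n$.
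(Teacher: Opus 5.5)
Your proof is correct, and it departs from the paper only at the finiteness step you flagged. The paper takes finiteness of the support of \emph{every} solution as known (Proposition~\ref{prop:hetero-known}, via the cited finiteness results for critical points of Gaussian mixtures). Any two distinct NPMLEs therefore already have a finite combined support.

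You use only that every solution lives on the compact set $\{D_{X,\Sigma}=1\}$. Justify that compactness by $D_{X,\Sigma}\to0$ at infinity, not by $\conv(X)$: with unequal $\Sigma_i$, atoms are matrix-weighted averages of the $x_i$ and can lie outside the hull. The solution set is then a weak-$*$ compact convex set of measures. Krein--Milman, which you use implicitly, gives two distinct extreme points whenever there are two distinct solutions. The usual perturbation $(1\pm\varepsilon\sum_k a_k\mathbf 1_{A_k})\mu$ shows each extreme point has at most $n$ atoms. Total mass is preserved automatically because $q^\top\Phi=\mathbf 1^\top$ on $\{D_{X,\Sigma}=1\}$, and a bound of $n+1$ atoms would suffice anyway. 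Running the circuit argument on these two finitely supported extreme points then finishes exactly as in the paper.

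Your route buys independence from the deep finiteness theorems, at the cost of a little convex analysis. The same device would also make Wang's finite-support theorem unnecessary for Theorem~\ref{thm:uniqueness}. The paper's route is shorter given its citations. It also yields the extra fact that every NPMLE is finitely supported for every $X$, including at non-differentiability points; your argument does not give this, but the theorem does not need it. The speculative option of extending Wang's theorem can simply be dropped.
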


\paragraph{Adaptation of the earlier proof.}
The covariance tuple is held fixed throughout; the exceptional set $N_\Sigma$ may depend on it. The proof of Theorem~\ref{thm:uniqueness} changes in three places: Proposition~\ref{prop:hetero-known} supplies finite support, the convex auxiliary function uses $\Sigma_i^{-1}$ in its quadratic terms, and the Gaussian derivative identity is applied separately to each observation's kernel. The minimal linear-dependence argument is unchanged.

\subsection*{Proof of Theorem~\ref{thm:hetero-uniqueness}}

\begin{proposition}[Known heteroscedastic NPMLE properties]\label{prop:hetero-known}
For every $X$ and every fixed positive-definite covariance tuple $\Sigma$, there exists $\widehat\pi$ solving~\eqref{eq:hetero-npmle} with at most $n$ atoms. The likelihood vector $\widehat p=(p_{\widehat\pi,i}(x_i))_{i=1}^n$ is the same for every solution, and
\begin{equation}\label{eq:hetero-certificate}
D_{X,\Sigma}(\theta):=\frac1n\sum_{i=1}^n\frac{\varphi_{\Sigma_i}(x_i-\theta)}{\widehat p_i}\leq1,\qquad \supp(\widehat\pi)\subseteq\{\theta:D_{X,\Sigma}(\theta)=1\}
\end{equation}
for every solution $\widehat\pi$~\cite[Lemma~1]{soloff2025}. Moreover, every such $\widehat\pi$ is finitely supported; see~\cite[Corollary~6.2]{amendola2026} and~\cite[Theorem~1]{okuno2026}.
\end{proposition}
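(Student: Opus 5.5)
The statement collects facts about a finite-dimensional convex program, so I would prove the first three claims by convex analysis on the set of achievable likelihood vectors. For the finite-support claim I would rely on the cited results. Set $\gamma(\theta):=(\varphi_{\Sigma_i}(x_i-\theta))_{i=1}^n\in(0,\infty)^n$. This curve is continuous and tends to $0$ as $\|\theta\|\to\infty$, so $K:=\gamma(\R^d)\cup\{0\}$ is compact. Let $C:=\conv(K)$, which is compact by Carath\'eodory's theorem. Every $\pi\in\Pcal(\R^d)$ gives a vector $(p_{\pi,i}(x_i))_i$ lying in $\overline{\conv}\,\gamma(\R^d)\subseteq C$, since the vector is a barycenter of $\gamma$ under $\pi$. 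Conversely, every point of $C$ with no weight on $0$ is realized by a finitely supported $\pi$.

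The objective $L(p)=\sum_i\log p_i$ is strictly concave and continuous with values in $[-\infty,\infty)$. Hence it has a unique maximizer $\widehat p$ on $C$, and $\widehat p\in(0,\infty)^n$ because $\gamma(0)$ already gives a finite value. Write $\widehat p=\lambda_0\cdot 0+\sum_j\lambda_j\gamma(\theta_j)$. Then $\lambda_0=0$: otherwise rescaling the remaining weights by $1/(1-\lambda_0)$ strictly increases $L$. So $\widehat p$ is attained by a finitely supported probability measure, and the supremum over $\Pcal(\R^d)$ is achieved. Every solution has likelihood vector $\widehat p$, since any solution's vector lies in $C$ and maximizes $L$ there, and the maximizer is unique.

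For the atom bound, I would use that $\widehat p$ lies on the boundary of $C$, since $L$ increases along the direction $\widehat p$ itself. Let $H$ be a supporting hyperplane of $C$ at $\widehat p$. Then $\widehat p$ is a convex combination of points of $K\cap H$. This set spans an affine space of dimension at most $n-1$, so Carath\'eodory gives at most $n$ points, and none of them is $0$ by the rescaling argument.

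For the certificate, I would take any solution $\widehat\pi$ and $\theta\in\R^d$ and set $\pi_t=(1-t)\widehat\pi+t\delta_\theta$. The right derivative of $\ell_{X,\Sigma}(\pi_t)$ at $t=0$ is
\[
\sum_i\varphi_{\Sigma_i}(x_i-\theta)/\widehat p_i-n=n\bigl(D_{X,\Sigma}(\theta)-1\bigr),
\]
and it must be $\le0$ by optimality. This gives $D_{X,\Sigma}\le1$. Integrating against $\widehat\pi$ and using Fubini gives $\int D_{X,\Sigma}\,d\widehat\pi=\frac1n\sum_i\widehat p_i/\widehat p_i=1$. Together with $D_{X,\Sigma}\le1$, this forces $D_{X,\Sigma}=1$ $\widehat\pi$-a.e. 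Since $D_{X,\Sigma}$ is continuous, $D_{X,\Sigma}=1$ on $\supp\widehat\pi$.

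The main obstacle is the finite-support claim, which I would not reprove from scratch but attribute to the cited works. The certificate reduces it to showing that the zero set of the real-analytic function $1-D_{X,\Sigma}$ is finite. This function is nonnegative and tends to $1$ at infinity, so its zero set is a compact real-analytic variety consisting of global minima. In $d=1$ compactness plus isolated zeros finishes the argument. In higher dimensions one must exclude positive-dimensional components. The natural first attempt is to show that along any analytic curve in the zero set, the exponential sums $\sum_i c_i e^{-\frac12(x_i-\theta)^\top\Sigma_i^{-1}(x_i-\theta)}$ cannot be constant, using the distinct quadratic growth rates at infinity. This is exactly the content of~\cite{amendola2026,okuno2026}, which I would invoke.
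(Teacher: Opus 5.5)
Your proposal is correct, and it does more than the paper does. For existence, the $n$-atom bound, uniqueness of $\widehat p$, and the certificate, the paper gives no argument and simply imports \cite[Lemma~1]{soloff2025}. You instead reprove these facts with the standard Lindsay-type geometry:
\begin{itemize}
\item compactness of $C=\conv(\gamma(\R^d)\cup\{0\})$ and strict concavity of $\sum_i\log p_i$;
\item the rescaling argument excluding weight on $0$;
\item a supporting hyperplane at $\widehat p$ followed by Carath\'eodory in dimension $n-1$;
\item the one-sided derivative along $(1-t)\widehat\pi+t\delta_\theta$, then $\int D_{X,\Sigma}\,d\widehat\pi=1$ and continuity of $D_{X,\Sigma}$.
\end{itemize}
Each step is sound. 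Your version is self-contained and shows these properties are pure convex geometry of the likelihood curve, valid for any bounded continuous kernel vanishing at infinity. The paper's citation trades that for brevity.

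For finite support, you and the paper both defer to \cite{amendola2026,okuno2026}. However, the remark after the proposition gives a cleaner bridge than your framing via compact real-analytic varieties. $D_{X,\Sigma}$ is a positive multiple of the finite Gaussian mixture with means $x_i$, covariances $\Sigma_i$, and weights proportional to $1/\widehat p_i$. Since $D_{X,\Sigma}\le 1$, every point of $\{D_{X,\Sigma}=1\}$ is a global maximum and hence a critical point. Okuno's finiteness of the critical set therefore applies directly. This is the precise link your sentence ``this is exactly the content of the cited works'' leaves implicit. It also removes any need to rule out positive-dimensional components by hand.

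Your heuristic about analytic curves and distinct quadratic growth rates is not needed. As stated it would not obviously work, since an analytic arc inside the compact zero set need not continue out to infinity. You were right not to rely on it.
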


The finite-support conclusion also follows from Okuno's finiteness theorem for the critical points of a finite Gaussian mixture~\cite[Theorem~1]{okuno2026}: $D_{X,\Sigma}$ is a positive multiple of such a mixture, and every point where $D_{X,\Sigma}=1$ is a global maximum and hence a critical point by~\eqref{eq:hetero-certificate}.

\begin{lemma}[Differentiability of the optimal log-likelihood]\label{lem:hetero-differentiability}
For every fixed $\Sigma$, the function $\ell^\star_\Sigma:\R^{nd}\to\R$ is locally Lipschitz and hence differentiable almost everywhere.
\end{lemma}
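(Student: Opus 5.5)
We follow the proof of Lemma~\ref{lem:differentiability}, replacing the isotropic quadratic by the $\Sigma_i^{-1}$-weighted one. Define
\[
F_\Sigma(X):=\ell^\star_\Sigma(X)+\sum_{i=1}^n\Bigl(\tfrac d2\log(2\pi)+\tfrac12\log\det\Sigma_i+\tfrac12 x_i^\top\Sigma_i^{-1}x_i\Bigr).
\]
Expanding the exponent $-\tfrac12(x_i-\theta)^\top\Sigma_i^{-1}(x_i-\theta)$ gives the representation
\[
F_\Sigma(X)=\sup_{\pi\in\Pcal(\R^d)}\sum_{i=1}^n\log\int\exp\bigl(x_i^\top\Sigma_i^{-1}\theta-\tfrac12\theta^\top\Sigma_i^{-1}\theta\bigr)\,d\pi(\theta).
\]

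For each fixed $\pi$ and $i$, the map $x_i\mapsto\log\int e^{x_i^\top\Sigma_i^{-1}\theta}\,d\mu_i(\theta)$ is a log-Laplace transform of the positive measure $d\mu_i=e^{-\theta^\top\Sigma_i^{-1}\theta/2}d\pi$. It is evaluated at the linear image $\Sigma_i^{-1}\theta$, so it is convex in $x_i$ by H\"older's inequality, with values in $(-\infty,+\infty]$. Summing over $i$ and taking the supremum over $\pi$ preserves convexity, so $F_\Sigma$ is convex as an extended-real function.

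The only step needing care is finiteness of $F_\Sigma$ everywhere on $\R^{nd}$, since a convex function is locally Lipschitz only on the interior of its domain.

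\emph{Upper bound.} Since $\varphi_{\Sigma_i}\le(2\pi)^{-d/2}(\det\Sigma_i)^{-1/2}$, every $\pi$ satisfies
\[
\ell_{X,\Sigma}(\pi)\le-\sum_i\Bigl(\tfrac d2\log(2\pi)+\tfrac12\log\det\Sigma_i\Bigr),
\]
and hence $F_\Sigma(X)\le\tfrac12\sum_i x_i^\top\Sigma_i^{-1}x_i<\infty$.

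\emph{Lower bound.} Choosing $\pi=\delta_0$ makes each integrand equal to $1$, so $F_\Sigma(X)\ge0$.

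Thus $F_\Sigma$ is a finite convex function on all of $\R^{nd}$, and is therefore locally Lipschitz~\cite[Example~9.14]{rockafellarwets1998}. The added terms form a smooth quadratic plus a constant, so subtracting them shows that $\ell^\star_\Sigma$ is locally Lipschitz. Rademacher's theorem~\cite[Theorem~9.60]{rockafellarwets1998} then gives differentiability of $\ell^\star_\Sigma$ outside a Lebesgue-null set, which may depend on the fixed $\Sigma$.

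Apart from this finiteness check, the argument is a direct transcription of the isotropic case. Positive-definiteness of each $\Sigma_i$ is used only to make $\Sigma_i^{-1}$ and $\log\det\Sigma_i$ well defined and the kernel bound finite.
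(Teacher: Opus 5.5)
Your proof is correct and follows the paper's argument essentially step for step. You use the same auxiliary function $F_\Sigma$, the same expansion of the exponent, convexity via H\"older, finiteness from $0\le F_\Sigma(X)\le\frac12\sum_i x_i^\top\Sigma_i^{-1}x_i$, and then local Lipschitz continuity plus Rademacher's theorem. Your allowance for the value $+\infty$ is harmless but unnecessary: completing the square shows each integrand is bounded by $e^{x_i^\top\Sigma_i^{-1}x_i/2}$.
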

\begin{proof}
Define
\[
F_\Sigma(X):=\ell^\star_\Sigma(X)+\frac{nd}{2}\log(2\pi)+\frac12\sum_i\log\det\Sigma_i+\frac12\sum_i x_i^\top\Sigma_i^{-1}x_i.
\]
The bound $p_{\pi,i}(x_i)\leq(2\pi)^{-d/2}(\det\Sigma_i)^{-1/2}$ gives the upper bound below, and choosing $\pi=\delta_0$ gives the lower bound:
\[
0\leq F_\Sigma(X)\leq\frac12\sum_i x_i^\top\Sigma_i^{-1}x_i.
\]
Expanding the Gaussian exponent gives
\[
F_\Sigma(X)=\sup_{\pi\in\Pcal(\R^d)}\sum_i\log\int\exp\!\left(x_i^\top\Sigma_i^{-1}\theta-\tfrac12\theta^\top\Sigma_i^{-1}\theta\right)\,d\pi(\theta).
\]
H\"older's inequality makes each logarithmic integral convex in $x_i$. Hence $F_\Sigma$ is finite and convex, and thus locally Lipschitz~\cite[Example~9.14]{rockafellarwets1998}. Subtracting the smooth quadratic and applying Rademacher's theorem~\cite[Theorem~9.60]{rockafellarwets1998} proves the claim.
\end{proof}

\begin{lemma}[Equality of likelihood derivatives]\label{lem:hetero-derivatives}
At every dataset $X$ where $\ell^\star_\Sigma$ is differentiable, every $\widehat\pi$ solving~\eqref{eq:hetero-npmle} satisfies
\begin{equation}\label{eq:hetero-derivatives}
\nabla p_{\widehat\pi,i}(x_i)=\widehat p_i\,\nabla_{x_i}\ell^\star_\Sigma(X),\qquad i=1,\ldots,n.
\end{equation}
\end{lemma}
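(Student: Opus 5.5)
The plan is to run the envelope argument from Lemma~\ref{lem:derivatives}, now with observation-specific kernels. Fix a dataset $X$ at which $\ell^\star_\Sigma$ is differentiable, and fix any solution $\widehat\pi$ of~\eqref{eq:hetero-npmle}. By Proposition~\ref{prop:hetero-known}, $\widehat\pi$ is finitely supported, say $\widehat\pi=\sum_j w_j\delta_{\theta_j}$. The perturbed objective is then the finite sum
\[
X'\mapsto \ell_{X',\Sigma}(\widehat\pi)=\sum_i\log\sum_j w_j\varphi_{\Sigma_i}(x_i'-\theta_j).
\]
Each inner sum is a finite positive combination of smooth Gaussians and is strictly positive, so this map is $C^\infty$ on all of $\R^{nd}$. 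No dominated-convergence argument is needed.

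Next, consider $G(X'):=\ell^\star_\Sigma(X')-\ell_{X',\Sigma}(\widehat\pi)$. By the definition of $\ell^\star_\Sigma$ as a supremum, $G\ge 0$ everywhere. Since $\widehat\pi$ attains the supremum at $X$, we have $G(X)=0$, so $X$ is a global minimum of $G$. By assumption $\ell^\star_\Sigma$ is differentiable at $X$, and the second term is smooth, so $G$ is differentiable at $X$. At an interior minimum of a function differentiable there, the gradient vanishes. This gives
\[
\nabla_{x_i}\ell^\star_\Sigma(X)=\nabla_{x_i}\log p_{\widehat\pi,i}(x_i)=\frac{\nabla p_{\widehat\pi,i}(x_i)}{p_{\widehat\pi,i}(x_i)}.
\]
Here I use that the $i$-th summand depends on $X'$ only through $x_i'$, with the kernel $\varphi_{\Sigma_i}$ fixed.

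Finally, I replace $p_{\widehat\pi,i}(x_i)$ by $\widehat p_i$. This uses the uniqueness of the likelihood vector in Proposition~\ref{prop:hetero-known}, and positivity of $\widehat p_i$ justifies the division. Multiplying through yields~\eqref{eq:hetero-derivatives}. The right-hand side does not involve $\widehat\pi$, so the derivatives are the same for every solution.

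I expect no real obstacle; the only care point is that differentiability of the smooth term needs $\widehat\pi$ to have compact support. Finite support from Proposition~\ref{prop:hetero-known} supplies this. Alternatively, Soloff et al.\ give support in a bounded set, which also suffices.
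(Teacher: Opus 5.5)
Your proposal is correct and follows the paper's own envelope argument: finite support makes $X'\mapsto\ell_{X',\Sigma}(\widehat\pi)$ smooth, the difference $\ell^\star_\Sigma(X')-\ell_{X',\Sigma}(\widehat\pi)$ is nonnegative and vanishes at $X$, so its gradient there is zero, and uniqueness of the likelihood vector lets you write $\widehat p_i$. A minor aside: compact support is not actually needed for smoothness, since all $x$-derivatives of $\varphi_{\Sigma_i}(x-\theta)$ are bounded uniformly in $\theta$, so $p_{\pi,i}$ is smooth for every $\pi\in\Pcal(\R^d)$.
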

\begin{proof}
By Proposition~\ref{prop:hetero-known}, $\widehat\pi$ is finitely supported, so $X'\mapsto\ell_{X',\Sigma}(\widehat\pi)$ is smooth. Holding $\widehat\pi$ and $\Sigma$ fixed, the function $\ell^\star_\Sigma(X')-\ell_{X',\Sigma}(\widehat\pi)$ is nonnegative and vanishes at $X'=X$. Its gradient there is zero, giving~\eqref{eq:hetero-derivatives}, as in Lemma~\ref{lem:derivatives}.
\end{proof}

\begin{proof}[Proof of Theorem~\ref{thm:hetero-uniqueness}]
Fix $\Sigma$ and a dataset $X$ where $\ell^\star_\Sigma$ is differentiable. Suppose two distinct solutions $\widehat\pi_0,\widehat\pi_1$ exist. By Proposition~\ref{prop:hetero-known}, their combined support consists of finitely many distinct points $\theta_1,\ldots,\theta_m$. Let $u,v$ be their weight vectors, put $w=(u+v)/2$, and define $\Phi_{ij}=\varphi_{\Sigma_i}(x_i-\theta_j)$. Then
\[
w_j>0,\qquad \Phi u=\Phi v=\Phi w=\widehat p,\qquad 0\ne u-v\in\ker\Phi.
\]
By~\eqref{eq:hetero-certificate}, $q_i=(n\widehat p_i)^{-1}$ satisfies $q^\top\Phi=\mathbf1^\top$. In particular, every $h\in\ker\Phi$ satisfies $\sum_jh_j=0$.

Choose $0\ne h\in\ker\Phi$ with inclusion-minimal support $T$: no nonzero vector in $\ker\Phi$ has support strictly contained in $T$. The cancellation argument proving~\eqref{eq:circuit} gives
\[
\ker\Phi_T=\operatorname{span}\{h_T\},\qquad h_j\ne0\ (j\in T),\qquad |T|\geq2.
\]
For sufficiently small $\varepsilon>0$, the weights $w\pm\varepsilon h$ are positive, sum to one, and give likelihood vector $\widehat p$. They therefore define two solutions $\widehat\pi_\pm$. For each $i$, set
\[
g_i(x):=\sum_{j\in T}h_j\varphi_{\Sigma_i}(x-\theta_j)=\frac{p_{\widehat\pi_+,i}(x)-p_{\widehat\pi_-,i}(x)}{2\varepsilon}.
\]
We have $g_i(x_i)=0$ because $\Phi h=0$, and $\nabla g_i(x_i)=0$ by Lemma~\ref{lem:hetero-derivatives}. The Gaussian identity
\[
\Sigma_i\nabla_x\varphi_{\Sigma_i}(x-\theta)=(\theta-x)\varphi_{\Sigma_i}(x-\theta)
\]
therefore gives
\[
0=\Sigma_i\nabla g_i(x_i)+x_i g_i(x_i)=\sum_{j\in T}h_j\Phi_{ij}\theta_j,\qquad i=1,\ldots,n.
\]
Thus, for every coordinate $k$, the vector $(h_j\theta_{j,k})_{j\in T}$ belongs to $\ker\Phi_T=\operatorname{span}\{h_T\}$. Since all $h_j$ on $T$ are nonzero, all $\theta_j$, $j\in T$, coincide, contradicting their distinctness and $|T|\geq2$.

Lemma~\ref{lem:hetero-differentiability} shows that the remaining datasets form a Lebesgue-null set $N_\Sigma$. Finally, Proposition~\ref{prop:hetero-known} supplies a solution with at most $n$ atoms; wherever the solution is unique, it must have this support bound.
\end{proof}

\end{document}